\newcommand{\disp}{\displaystyle}
\newcommand{\nc}{\newcommand}
\nc{\G}{{\Gamma}} \nc{\BC}{{\mathbb C}} \nc{\BQ}{{\mathbb Q}}
\nc{\BR}{{\mathbb R}} \nc{\BZ}{{\mathbb Z}} \nc{\BP}{{\mathbb P}} \nc{\PC}{{\BP_1(\BC)}}
\nc{\BN}{{\mathbb N}} \nc{\BM}{{\mathbb M}}
\nc{\fH}{{\mathbb H}}
\nc{\mat}{{\binom{a\,\ b}{c\,\ d}}}
\nc{\U}{{\mathcal U}}
\nc{\PS}{{\mbox{PSL}_2(\BZ)}} \nc{\SL}{{\mbox{SL}_2(\BZ)}}
\nc{\SR}{{\mbox{SL}_2(\BR)}} \nc{\PR}{{\mbox{PSL}_2(\BR)}}
\nc{\SLC}{{\mbox{SL}_2(\BC)}}
\nc{\GL}{{\mbox{GL}}} \nc{\PQ}{{\mbox{PGL}_2^+(\BQ)}}
\nc{\GR}{{\mbox{GL}_2^+(\BR)}} \nc{\PG}{{\mbox{PGL}_2(\BC)}}
\nc{\GC}{{\mbox{GL}_2(\BC)}}
\nc{\f}{{\mathcal{F}(\fH)}}
\nc{\Cc}{\widehat{\BC}}
\nc{\e}{{E_{\rho}(\G)}}
\nc{\g}{{\gamma}}
\nc{\vm}{{V_{\rho}(\G)}}
\nc{\oo}{{\mathcal O}}
\nc{\M}{{\mbox{M}}}
\nc{\om}{{\omega}}
\nc{\Om}{{\Omega}}
\nc{\TX}{{\widetilde{X}}}
\nc{\ol}{\overline}
\nc{\cl}{{\mathcal L}}
\nc{\ce}{{\mathcal E}}
\nc{\la}{{\lambda}}
\nc{\La}{{\Lambda}}
\nc{\cz}{{\mathcal Z}}
\newtheorem{numbered}{}[section]
\newtheorem{thm}[numbered]{Theorem}
\newtheorem{lem}[numbered]{Lemma}
\numberwithin{equation}{section}
\newcommand{\thmref}[1]{Theorem~\ref{#1}}
\newcommand{\lemref}[1]{Lemma~\ref{#1}}
\begin{document}

\title[]{On the modularity of solutions of certain differential equations of hypergeometric type }
\author[]{Hicham Saber} \author[]{Abdellah Sebbar}
\address{Department of Mathematics, Faculty of Science, University of Ha'il,   Ha'il, Kingdom of Saudi Arabia}
\address{Department of Mathematics and Statistics, University of Ottawa,
	 Ottawa Ontario K1N 6N5 Canada}
\email{hicham.saber7@gmail.com}
\email{asebbar@uottawa.ca}
\subjclass[2010]{11F03, 11F11, 34M05.}
\keywords{Schwarz derivative, Modular forms, Eisenstein series, Equivariant functions, representations of the modular group, Fuchsian differential equations}
\maketitle
\maketitle
\begin{abstract}
	The purpose of this paper is to provide  answers to some questions raised in a paper by Kaneko and Koike about the modularity of the solutions of a differential equations of hypergeometric type. In particular, we provide a number-theoretic explanation of why the modularity  of the solutions occurs in some cases and does not occur in other cases. This also proves their conjecture on the completeness of the list of modular solutions after adding some missing cases.
\end{abstract}

\section{Introduction}

This paper deals with some questions and conjectures raised in the paper \cite{kaneko-koike} regarding the solutions of the equation
\[
f''- \frac{2\pi i(k+1)}{6}E_2(\tau)\,f' \,+\,   \frac{2\pi i k (k+1)}{12}E'_2(\tau)\,f=0,
\]
where  $k$ is a rational number and $E_2$ is the weight 2 (quasi-modular) Eisenstein series. This differential equation appeared first in \cite{kaneko-zagier} for integers $k\equiv 0,\,4\mod 6$ in connection with the lifting of supersingular $j-$invariants of elliptic curves. In \cite{kaneko-koike}, modular solutions of the above differential equation are described explicitly when $k$ is an integer or half an integer. In addition, the authors conjectured, based on numerical experiments, that their solutions exhaust all the modular solutions. In the meantime, the normal form of the above differential equation can be shown to be
\begin{equation}\label{ss1}
	y''+\pi^2\left(\frac{k+1}{6}\right)^2E_4\,y=0,
\end{equation}
where $E_4$ is the weight 4 Eisenstein series. This Fuchsian differential equation has been the subject of our study in a more general setting by seeking the solutions to the equation $y''+\pi^2r^2E_4y=0$, where $r$ is a rational number. Our investigation relies on solving the Schwarzian equation $\{h,\tau\}=2r^2\pi^2E_4$ where $\{h,\tau\}$ is the Schwarz derivative of $h$. To be more precise, it is shown in \cite{forum}  that the Schwarzian equation admits solutions that are modular functions if and only if $r=n/m$ where $2\leq m\leq 5$ and $\gcd(m,n)=1$. In this case, the invariance group of the solution is $\G(m)$. In \cite{ramanujan}, we explicitly solved the same Schwarzian equation for each $r=k/6$ where $k\equiv 1 \mod 12$. Each solution is obtained from a  system of algebraic equations. Finally, in \cite{preprintSS}, the same equation is solved for every positive integer $r$ by solving a certain Eigenvector problem. Using these results,  we shall provide in this paper, among other results, a number-theoretic explanation for the modularity or non-modularity of the solutions. We also show that the cases for which all the solutions are modular is not exhaustive, contrary to what was conjectured in  \cite{kaneko-koike}.

\section{Automorphic Schwarzian equations}\label{mde}

The Schwarz derivative of a meromorphic function $h$ on the upper half-plane $\fH=\{\tau\in\BC | \Im \tau>0\}$ is defined by
\[
\{h,\tau\}=\left(\frac{h''}{h'}\right)'-\frac{1}{2}\left(\frac{h''}{h'}\right)^2.
\]
It is projectively invariant and $\{h,\tau\}=\{g,\tau\}$ if and only if $f$ is a linear fraction of $g$. Moreover, $\{h,\tau\}=0$ if and only if $h$ is a linear fraction of $\tau$. There is a close connection between the Schwarz derivative and second order ordinary differential equations. Indeed, let $R(\tau)$ be a meromorphic function on $\fH$ and let $y_1$ and $y_2$ be two linearly independent local solutions to $y''+R(\tau)\,y=0$, then locally, $h=y_2/y_1$ is a solution to the Schwarzian differential equation $\{h,\tau\}=2R(\tau)$. This connection yields most analytic properties of the Schwarz derivative.

If $w$ is a function of $\tau$, then we have the cocycle condition
\[
\{h,\tau\}d\tau^2=\{h,w\}dw^2+\{w,\tau\}d\tau^2.
\]
As a consequence, if $h$ is an automorphic function for a discrete subgroup $\G$ of $\SR$, then $\{h,\tau\}$ is a weight 4 automorphic form for $\G$ that has a double pole where $h'$ vanishes or where $h$ has at least a double pole, and it is holomorphic everywhere else including at the cusps of $\G$ \cite{mathann}. Conversely, if we suppose that $\{h,\tau\}$ is a (meromorphic) automorphic form of weight 4 for $\G$, then according to \cite{forum}, there exists a 2-dimensional complex representation $\rho$ of $\bar{\G}$, the image of $\G$ is $\PR$, such that for all $\tau\in\fH$ and $\gamma\in\G$
\[
h(\gamma\cdot \tau)\,=\,\rho(\gamma)\cdot h(\tau),
\]
where the action on both sides is by linear fractional transformation.
We call such $h$ a $\rho-$equivariant functions. The representation $\rho$ can be lifted naturally to $\G$.
 If $\rho=1$ is constant, then $h$ is an automorphic function, while if $\rho=\mbox{Id}$, then $h$ commutes with the action of $\G$ and we simply say that $h$ is an equivariant function for $\G$. For an arbitrary Fuchsian group $\G$ and for an arbitrary representation $\rho$ of $\G$, $\rho-$equivariant functions always exist \cite{kyushu}. 

To complete the connection between Schwarzian equations and ordinary differential equations, we have the following
\begin{thm}\cite[Theorem 3.3]{forum}\label{sol-2}
	Suppose that $f$ is a weight $4$  automorphic form for $\G$ that is holomorphic on $\fH$.
	\begin{enumerate}
		\item  If $y_1$ and $y_2$ are two linearly independent holomorphic solutions to $ y''+\frac{1}{2}\,f\,y=0$ then $\displaystyle F=\binom{y_1}{y_2}$ is a weight $-1$ vector-valued automorphic form for some multiplier system $\rho$. Furthermore, $h=y_1/y_2$ is a $\rho$-equivariant function satisfying $\{h,\tau\}=f$.
		\item If $h$ is a solution to $\{h,\tau\}=f$, then $h'$ does not vanish on $\fH$ and $y_1=h/\sqrt{h'}$ and $y_2=1/\sqrt{h'}$ are two linearly independent holomorphic solutions to $ y''+\frac{1}{2}\,f\,y=0$.
	\end{enumerate}
\end{thm}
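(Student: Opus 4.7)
The plan is to handle the two parts essentially independently, both resting on the standard identity that links the Schwarzian derivative of $h=y_1/y_2$ to the coefficient of a second–order linear ODE satisfied by $y_1,y_2$.

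For part (1), I would start by noting that since $f$ is holomorphic on the simply connected domain $\fH$, classical existence theory for linear ODEs gives a two–dimensional space of holomorphic solutions of $y''+\tfrac{1}{2}fy=0$ on $\fH$. The key step is a slash–action verification: for any $\gamma=\mat\in\G$, set $(y\big|\gamma)(\tau) := (c\tau+d)^{-1}y(\gamma\tau)$. Using the chain rule together with the weight $4$ transformation $f(\gamma\tau)=(c\tau+d)^4f(\tau)$, a direct computation shows that $y\big|\gamma$ again satisfies $y''+\tfrac{1}{2}fy=0$. Consequently $F=\binom{y_1}{y_2}$ is carried into the same two–dimensional solution space, so there is a unique matrix $\rho(\gamma)\in\GL_2(\BC)$ (in fact $\SLC$, by a Wronskian argument) with $F\big|\gamma=\rho(\gamma)F$. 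The cocycle relation $\rho(\gamma_1\gamma_2)=\rho(\gamma_1)\rho(\gamma_2)$ follows from the uniqueness of the matching matrix. Dividing the two components of $F\big|\gamma=\rho(\gamma)F$ gives $h(\gamma\tau)=\rho(\gamma)\cdot h(\tau)$ as a Möbius action, i.e.\ $h$ is $\rho$-equivariant. Finally, the identity $\{h,\tau\}=f$ is the classical computation: from $y_2 h = y_1$ one differentiates twice, eliminates $y_1''$ and $y_2''$ using the ODE, and after simplification one obtains $\{h,\tau\}=2\cdot\tfrac{1}{2}f=f$.

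For part (2), the first point is that $h'$ cannot vanish on $\fH$. Indeed, if $h'$ had a zero of order $m\geq 1$ at some $\tau_0\in\fH$, then $h''/h'$ would have a simple pole with residue $m$ at $\tau_0$, and the defining formula
\[
\{h,\tau\}=\left(\frac{h''}{h'}\right)'-\frac{1}{2}\left(\frac{h''}{h'}\right)^2
\]
would produce a pole of order $2$ at $\tau_0$ (the leading terms coming from the two summands do not cancel). This contradicts the assumed holomorphy of $f=\{h,\tau\}$ on $\fH$. Since $h'$ is holomorphic and nonvanishing on the simply connected domain $\fH$, one may choose a single–valued holomorphic branch of $\sqrt{h'}$, and then $y_1=h/\sqrt{h'}$ and $y_2=1/\sqrt{h'}$ are well defined and holomorphic.

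It remains to verify by direct differentiation that both $y_1$ and $y_2$ satisfy $y''+\tfrac{1}{2}fy=0$: writing $y_2=(h')^{-1/2}$ one computes $y_2''/y_2$ in terms of $h'$ and $h''$ and recognizes $-\tfrac{1}{2}\{h,\tau\}$, and the same for $y_1$ using the product rule together with the obvious relation $y_1=hy_2$. Linear independence is immediate since $y_1/y_2=h$ is non-constant (as $h'\ne 0$). I expect the main obstacle to be the careful slash–action computation in part (1) and the pole–order bookkeeping in part (2); both are elementary but require disciplined use of the chain rule and of the cocycle relation $\{h,\tau\}d\tau^{2}=\{h,w\}dw^{2}+\{w,\tau\}d\tau^{2}$ recalled earlier in the section.
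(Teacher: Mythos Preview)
The paper does not prove this theorem; it is quoted from \cite{forum} and used as a black box, so there is no proof here to compare against. Your outline is the standard argument and is essentially correct, but two points need care.

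In part (1), your slash action has the wrong exponent: a direct computation shows that if $y$ solves $y''+\tfrac{1}{2}fy=0$ then it is $(c\tau+d)\,y(\gamma\tau)$, not $(c\tau+d)^{-1}y(\gamma\tau)$, that again satisfies the equation. This is precisely what makes $F$ transform with weight $-1$ in the usual convention $F(\gamma\tau)=(c\tau+d)^{-1}\rho(\gamma)F(\tau)$; with the exponent you wrote the verification you describe would not go through.

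In part (2), you assert that $h'$ is holomorphic, but a meromorphic solution $h$ of $\{h,\tau\}=f$ may a priori have simple poles: your pole-order bookkeeping excludes zeros of $h'$ and poles of $h$ of order $\geq 2$, yet at a simple pole of $h$ the leading terms in $\{h,\tau\}$ cancel. At such a point $h'$ has a double pole, so a single-valued branch of $\sqrt{h'}$ still exists on $\fH$, $y_2=1/\sqrt{h'}$ acquires a simple zero there, and $y_1=hy_2$ remains holomorphic. Adding this short check completes the argument.
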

We also introduce the main theorem from \cite{forum} which we will rely on to assert the modularity or non-modularity of the solutions to the modular differential equations under study.
\begin{thm}\cite[Theorem 8.3]{forum}\label{thm-forum}
	The Schwarzian equation $\{h,\tau\}=2\pi^2r^2E_4$ admits a modular function as a solution 
 if	and only if	$r=n/m$ with co-prime $m$ and $n$  and $2\leq m\leq 5$, in which case the invariance group is $\G(m)$.
	\end{thm}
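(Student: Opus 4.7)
The plan is to exploit Theorem~\ref{sol-2} to reduce the question to the monodromy analysis of the auxiliary equation $y''+\pi^2 r^2E_4\,y=0$. By that theorem, any solution $h$ of $\{h,\tau\}=2\pi^2r^2E_4$ arises as a ratio $h=y_1/y_2$ of two independent holomorphic solutions on $\fH$, and the column vector $\binom{y_1}{y_2}$ transforms as a weight $-1$ vector-valued automorphic form for some representation $\rho$ of $\SL$. Consequently $h$ is $\rho$-equivariant. If $h$ is a modular function, then it is invariant under some finite-index subgroup $\G\le\SL$, so the projective image $\bar\rho(\G)$ must be trivial, forcing $\bar\rho$ to factor through a \emph{finite} quotient of $\PS$.

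Next I would carry out a local analysis at the three orbits of $\PS$ where the equation is not locally trivializable: the cusp $i\infty$ and the elliptic points $i$ and $\zeta_3=e^{2\pi i/3}$. Pulling the equation back through the local uniformizers $q=e^{2\pi i\tau}$ at the cusp and the appropriate fractional-linear uniformizers at the elliptic points, I compute the indicial exponents in terms of $r$ using the facts that $E_4(i\infty)=1$, $E_4(i)\ne0$, and $E_4(\zeta_3)=0$ with a simple zero. The differences of the two exponents at the cusp and at each elliptic point give the local projective monodromies of $\bar\rho$ around a small loop. Modularity requires that each such local monodromy have finite order, and the orders must be compatible with the $\PS$-stabilizer orders $\infty,2,3$ at the respective points. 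This produces three rational conditions on $r$ of the form $d_\infty r\in\BZ,\ d_i r\in\tfrac12\BZ,\ d_{\zeta_3} r\in\tfrac13\BZ$ (or the analogous parity/divisibility constraints), and in particular $r$ must be rational.

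I would then invoke the classification of finite subgroups of $\PG$ (cyclic, dihedral, and the exceptional polyhedral groups $A_4,S_4,A_5$), since the global projective monodromy image $\bar\rho(\PS)$ must be one of these. Writing $r=n/m$ in lowest terms, the local data forces the denominator $m$ to be the order of the image of a specific generator, and matching against the Schwarz/Klein list eliminates every possibility except $m\in\{2,3,4,5\}$, ruling out in particular $m=6$ (where the tetrahedral/octahedral/icosahedral groups offer no compatible assignment of local exponents at the three branch points). Once the list of admissible $m$ is pinned down, the invariance group of $h$ contains the kernel of $\bar\rho$; identifying this kernel with $\G(m)$ is then a matter of checking the indices and the standard presentation of $\PS/\G(m)$.

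The main obstacle will be Step~3: balancing the local exponent data at the three distinguished orbits against the classification of finite subgroups of $\PG$ to extract the precise list $2\le m\le 5$ and to exclude larger $m$ (especially $m=6$, which looks tempting because $6$ is the weight of the local order at $\zeta_3$ times the one at $i$). For the converse direction one produces explicit solutions: for each $m\in\{2,3,4,5\}$ and $\gcd(m,n)=1$, one writes down a Hauptmodul for $\G(m)$ (or a suitable power of a ratio of eta-quotients) whose Schwarzian is computed directly and shown to equal $2\pi^2(n/m)^2E_4$, thereby realizing the invariance group exactly as $\G(m)$.
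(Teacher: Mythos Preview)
The paper does not prove this theorem at all: Theorem~\ref{thm-forum} is imported verbatim from \cite[Theorem~8.3]{forum} and is used here as a black box (it feeds into the proof of the theorem in Section~3 and into the discussion in Section~4). There is therefore nothing in the present paper to compare your proposal against.

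As to the proposal itself, what you have written is a plan, not a proof. The strategy---reduce via Theorem~\ref{sol-2} to the projective monodromy of $y''+\pi^2r^2E_4\,y=0$, compute local exponents at $i\infty$, $i$, $\zeta_3$, and match against the Klein--Schwarz list of finite subgroups of $\PG$---is the classical approach and is plausibly close to how \cite{forum} proceeds. But several steps are only gestured at: you do not actually compute the indicial exponents (the claim about $E_4$ having a simple zero at $\zeta_3$ is correct, but the exponents at $i$ and at the cusp need to be written down explicitly to see what constraint they impose on $r$); you do not carry out the matching against the finite-subgroup list, which you yourself flag as the ``main obstacle''; and the identification of $\ker\bar\rho$ with $\G(m)$ rather than some other subgroup of the same index is asserted but not argued. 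For the converse, constructing for each coprime pair $(n,m)$ with $2\le m\le 5$ an explicit modular function with the right Schwarzian is nontrivial beyond $n=1$ (the paper itself exhibits in Section~4 the $\G(5)$ solutions for $n=1,2,3$ as rational functions of the level-$5$ Hauptmodul, suggesting this step requires real work). If you intend to supply an independent proof rather than cite \cite{forum}, each of these gaps must be closed.
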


\section{ Modular solutions }
In this section we will give some preparatory results about the modularity of the differential equations under review.

 Let $h$ be a  $\rho$-equivariant function on $\fH$, that is
\begin{equation}\label{equiv}
  h(\gamma \tau)\,=\,\rho(\gamma)\; h(\tau)\,,\ \tau\in\fH\,,\ \gamma\in\SL\,,
\end{equation}

and let $\G$ be a finite index subgroup of $\SL$, then we have

 \begin{lem}\label{equi-mod} Let $h$ be a non-constant $\rho-$equivariant function where $\rho$ is a 2-dimensional representation of the modular group. If there exists   $k\in \BZ$ such that  $h$ is a weight $k$ modular form on a finite index modular subgroup $\G$, then $k=0$ and $\G \subseteq \ker(\rho)$.
 \end{lem}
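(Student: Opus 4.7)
The plan is to combine the two transformation laws available for $h$ into a single identity and then extract information from it in two stages: first that $k$ must vanish, and second that $\rho|_\G$ must be scalar. For every $\gamma = \mat \in \G$ one has, on the one hand, the modular form identity $h(\gamma\tau)=(c\tau+d)^k h(\tau)$ and, on the other, the equivariance identity $h(\gamma\tau)=\rho(\gamma)\cdot h(\tau)$, where the dot denotes linear fractional action on $\mathbb{P}^1(\BC)$. Equating these yields the basic identity
\begin{equation}\tag{$\ast$}
\rho(\gamma)\cdot h(\tau)\;=\;(c\tau+d)^k\, h(\tau),\qquad \tau\in\fH.
\end{equation}
The strategy is to massage $(\ast)$ using a translation in $\G$ so as to cancel $h(\tau)$ on both sides and obtain a purely algebraic identity in $\tau$.

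Because $\G$ has finite index in $\SL$, the intersection $\langle T\rangle\cap\G$ has finite index in $\langle T\rangle$, so some $T^N$ with $N\ge 1$ lies in $\G$; for this element $(\ast)$ becomes the periodicity $h(\tau+N)=h(\tau)$. Finite index also guarantees that $\G$ is not contained in the upper triangular subgroup of $\SL$, so we may pick $\gamma\in\G$ with $c\neq 0$. Substituting $\tau+N$ for $\tau$ in $(\ast)$ and using the periodicity on both sides gives
\[
\rho(\gamma)\cdot h(\tau)\;=\;\rho(\gamma)\cdot h(\tau+N)\;=\;\bigl(c\tau+cN+d\bigr)^k h(\tau+N)\;=\;\bigl(c\tau+cN+d\bigr)^k h(\tau),
\]
which combined with $(\ast)$ itself gives $(c\tau+d)^k h(\tau)=(c\tau+cN+d)^k h(\tau)$. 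On the open dense set $\{\tau:h(\tau)\neq 0\}$, and hence identically, we get $(c\tau+d)^k=(c\tau+cN+d)^k$ as meromorphic functions of $\tau$. But for $c\neq 0$ and $k\neq 0$ the two sides have their unique zero or pole at the distinct points $-d/c$ and $-d/c-N$, a contradiction. Therefore $k=0$.

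With $k=0$ in hand, $(\ast)$ collapses for every $\gamma\in\G$ to $\rho(\gamma)\cdot h(\tau)=h(\tau)$, so every value of $h$ is fixed by the linear fractional transformation attached to $\rho(\gamma)$. Non-constancy of $h$ makes $h(\fH)$ an open, hence infinite, subset of $\mathbb{P}^1(\BC)$; since a M\"obius transformation with more than two fixed points is the identity, $\rho(\gamma)$ acts as the identity on $\mathbb{P}^1(\BC)$, i.e.\ is a scalar matrix. This is exactly the condition $\gamma\in\ker\rho$ in the projective sense used throughout the paper, so $\G\subseteq\ker\rho$.

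The main obstacle is the passage from the mixed identity $(\ast)$, which relates a linear fractional transformation of $h(\tau)$ to a cocycle-weighted scalar multiple of $h(\tau)$, to the purely algebraic identity $(c\tau+d)^k=(c\tau+cN+d)^k$. Removing the transcendental quantity $h(\tau)$ is not obvious a priori, and the key trick is that the translation symmetry guaranteed by finite index of $\G$ is precisely what lets one cancel $h(\tau)$ on both sides. Once this reduction is made, the rest of the argument is a one-line comparison of zeros and a one-line fixed-point count.
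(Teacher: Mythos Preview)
Your proof is correct and follows essentially the same approach as the paper's: combine the two transformation laws, use the periodicity coming from $T^{N}\in\G$ together with a non-triangular element of $\G$ to force $k=0$, and then use that $h$ assumes infinitely many values to conclude that each $\rho(\gamma)$ acts trivially. The only difference is cosmetic---you substitute $\tau\mapsto\tau+N$ explicitly and cancel $h(\tau)$, whereas the paper simply remarks that one side of the identity is $n_{\G}$-periodic while the other is not.
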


 \begin{proof}
 	Suppose that $h$ is a weight $k$ modular form for a finite index subgroup $\G$. For $\disp  \gamma=\binom{a_{\gamma}\ \ b_{\gamma}}{c_{\gamma}\ \ d_{\gamma}}\in\G$, we have
 \[
 (c_{\gamma}\tau+d_{\gamma})^k\,h(\tau)=h(\gamma \tau)=\rho(\gamma)\,h(\tau)
 \]
In other words,
\begin{equation}\label{period}
 (c_{\gamma}\tau+d_{\gamma})^k\,=\,\frac{1}{h(\tau)}\rho(\gamma)h(\tau).
\end{equation}
Since $\G$ is a finite index modular subgroup, there exists $\g\in\G$ such that $c_{\gamma}\neq 0$. Also,
let $n_{\G}$ be the cusp width at $\infty$ for $\G$, that is the least positive integer such that $\binom{1\ \ n_{\G}}{0\ \ \ 1}\in\G$. Then for all $\tau\in\fH$, $h(\tau+n_{\G})=h(\tau)$. Therefore, the right hand side of \eqref{period} is $n_{\G}-$periodic while the right hand side is not unless $k=0$. Since $h$ is a non-constant meromorphic function on $\fH$, it assumes infinitely many values, and clearly $\G \subseteq \ker(\rho)$.
 \end{proof}

We now turn our attention to the main differential equation of hypergeometric type that was the object of study in \cite{kaneko-koike,kaneko-zagier}
\begin{equation}\label{kan-koi}
f''- \frac{2\pi i(k+1)}{6}E_2(\tau)\,f' \,+\,   \frac{2\pi i k (k+1)}{12}E'_2(\tau)\,f=0.
\end{equation}
Here $k$ is a non-negative integer and $E_2$ and $E_4$ are the Eisenstein series given  by their $q-$series where $q=\exp(2\pi i\tau)$:
\begin{align*}
	E_2(\tau)&=1-24\,\sum_{n\geq 1}\,\sigma_1(n)\,q^n\,,\\
	E_4(\tau)&=1+240\,\sum_{n\geq 1}\,\sigma_3(n)\,q^n\,.
\end{align*}
 Through the change of function $\displaystyle f= \eta^{2(k+1)}\, y $, where $\eta$ is
the Dedekind eta-function defined by
\[
\eta(\tau)\,=\,q^{\frac{1}{24}}\,\prod_{k\geq1}(1-q^n)\,,\ \
\]
and using the fact that $\disp E_2=\frac{6}{i\pi}\frac{\eta'}{\eta}$ and that $\displaystyle \frac{6}{i \pi }E'_2 = E_2^2-E_4$, we see that
 the equation \eqref{kan-koi} is equivalent to
\begin{equation}\label{ASE}
y''+  \pi^2 \left(\frac{k+1}{6}\right)^2 E_4(\tau)\,y=0.
\end{equation}
\begin{thm}
  The differential equation (\ref{kan-koi}) has two linearly independent modular solutions, for some finite index subgroup $\G$ of $\SL$, if and only if there exist  two positive integers $n$ and $m$  satisfying $2\leq m\leq 5$, $\gcd(m,n)=1$, and
\[
 \frac{k+1}{6}=  \frac{n}{m}.
 \]
\end{thm}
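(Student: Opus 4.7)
The plan is to transport the modularity question for (\ref{kan-koi}) across the change of variables $f=\eta^{2(k+1)}y$ to its normal form (\ref{ASE}), and then combine Theorem \ref{sol-2}, Theorem \ref{thm-forum} and Lemma \ref{equi-mod} to reduce the statement to the already-known Schwarzian classification. Throughout, set $r=(k+1)/6$, so (\ref{ASE}) is exactly $y''+\pi^2 r^2 E_4\,y=0$, i.e.\ $y''+\frac{1}{2}f\,y=0$ with $f=2\pi^2 r^2 E_4$.

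For the sufficient direction, assume $r=n/m$ with $2\le m\le 5$ and $\gcd(m,n)=1$. Theorem \ref{thm-forum} supplies a modular function $h$ on $\G(m)$ with $\{h,\tau\}=2\pi^2 r^2 E_4$. Theorem \ref{sol-2}(2) converts this solution of the Schwarzian equation into two linearly independent solutions $y_1=h/\sqrt{h'}$ and $y_2=1/\sqrt{h'}$ of (\ref{ASE}). Since $h$ is $\G(m)$-invariant, $h'$ is a weight $2$ automorphic form on $\G(m)$, so $y_1,y_2$ are weight $-1$ modular forms carrying a quadratic multiplier; this multiplier becomes trivial on a suitable finite index subgroup $\G\subseteq\G(m)$. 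Multiplying by $\eta^{2(k+1)}$ (a weight $k+1$ form on $\SL$ with the eta multiplier) and shrinking $\G$ further if needed to absorb the combined multiplier, we obtain two linearly independent modular solutions $f_i=\eta^{2(k+1)}y_i$ of (\ref{kan-koi}) on $\G$.

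For the necessary direction, suppose (\ref{kan-koi}) has two linearly independent modular solutions $f_1,f_2$ on a finite index subgroup $\G$, of weights $w_1,w_2$ respectively. Then $y_i=f_i/\eta^{2(k+1)}$ are linearly independent solutions of (\ref{ASE}), and the ratio $h:=y_1/y_2=f_1/f_2$ is a non-constant meromorphic modular form of weight $w_1-w_2$ on $\G$. By Theorem \ref{sol-2}(1), the very same $h$ is a $\rho$-equivariant function satisfying $\{h,\tau\}=2\pi^2 r^2 E_4$. Lemma \ref{equi-mod} then forces $w_1-w_2=0$, so $h$ is a modular function on $\G$. Theorem \ref{thm-forum} now yields $r=n/m$ with $2\le m\le 5$ and $\gcd(m,n)=1$, which is the desired condition on $k$.

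The main technical subtlety is in the sufficient direction: one must track both the quadratic multiplier introduced by $\sqrt{h'}$ and the eta multiplier of $\eta^{2(k+1)}$ when verifying that the $f_i$ are honest modular forms. Since the theorem only asserts modularity on \emph{some} finite index subgroup, one can always descend to a principal congruence subgroup on which the combined multiplier trivializes, so this is not a real obstacle. The necessary direction, by contrast, is essentially a direct concatenation of Lemma \ref{equi-mod} with Theorem \ref{thm-forum}.
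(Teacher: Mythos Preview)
Your argument is correct and follows essentially the same route as the paper: pass to the normal form (\ref{ASE}) via $f=\eta^{2(k+1)}y$, use Theorem~\ref{sol-2} to identify the ratio of two solutions with a $\rho$-equivariant solution of the Schwarzian equation, invoke Lemma~\ref{equi-mod} to force weight zero in the necessary direction, and appeal to Theorem~\ref{thm-forum} for the classification. Your treatment of the sufficient direction is in fact slightly more careful than the paper's, since you explicitly flag the need to absorb both the quadratic multiplier from $\sqrt{h'}$ and the $\eta$-multiplier by passing to a smaller finite index subgroup, whereas the paper only mentions $\ker(\chi)$ for the quadratic character.
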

\begin{proof}
  If the differential equation (\ref{kan-koi}) has two linearly independent solutions, $g_1$ and $g_2$, which are  modular for some finite index subgroup $\G$ of $\SL$, then  $\displaystyle h=g_2/g_1$ will be a solution to the Shcwarzian equation
\[
\{h,\tau\}\,=\,8 \pi^2 \left(\frac{k+1}{12}\right)^2\,E_4(\tau).
\]
Therefore, $h$ is $\rho$-equivariant for some 2-dimensional representation $\rho$ of the modular group. Since $h$ is also a  modular form for $\G$, then by  \lemref{equi-mod}, $h$ must be a modular function for $\G$. In particular, $g_1$ and $g_2$ have the same weight.
By \thmref{thm-forum}, we must have
 \[
 8 \pi^2 \left(\frac{k+1}{12}\right)^2= 2 \pi^2 \left(\frac{n}{m}\right)^2
 \]
with $m$ and $n$ two positive integers satisfying $2\leq m\leq 5$ and $\gcd(m,n)=1$, which implies that
\[
 \frac{k+1}{6}=  \frac{n}{m}
 \]
 as desired.

 For the converse, by  \thmref{thm-forum}, the equation
 \[
\{h,\tau\}\,=\,2 \pi^2 \left(\frac{k+1}{6}\right)^2\,E_4(\tau),
\]
 has a $\G(m)$ modular function solution $h$, and by \thmref{sol-2}, we see that  $y_1=h/\sqrt{h'}$ and $y_2=1/\sqrt{h'}$ are two linearly independent holomorphic  solutions of
\[
y''+  \pi^2 \left(\frac{k+1}{6}\right)^2 \,E_4(\tau)\,y=0.
\]
It is clear that $y_1$ and $y_2$ are modular forms of weight $-1$ and a character $\chi$ for $\G(m)$, with  $\chi^2=1$, hence they are also modular for
the finite index group $\G=\mbox{ker}(\chi)$. The required solutions  are then  $\displaystyle f_1= \eta^{2(k+1)}\, y_1 $   and
$\displaystyle f_2= \eta^{2(k+1)}\, y_2 $ which are both of weight $k$.
\end{proof}

\section{A second look at the solutions obtained by Kaneko and Koike}
We now proceed to answer the question raised at the end of Section 3 of \cite{kaneko-koike}. In particular, we provide a number-theoretic explanation  for the modularity or non-modularity of the solutions to \eqref{kan-koi} that appeared in Theorem 1 and also we explain the quasi-modularity of the the solution in Theorem 2 of \cite{kaneko-koike}. Furthermore we show that these solutions do not exhaust all the modular solutions as it was conjectured at the end of the paper.


 When $k\equiv 1,\, 2,\, 3\, \mod 6$, or when $k$ is a half integer congruent to $1/2$ modulo $3$, then $(k+1)/6$ is a rational number whose reduced form has a denominator between 2 and 4. Therefore,
  \eqref{kan-koi}  has two linearly independent solutions, say $f_1$ and $f_2$, modular for certain congruence groups $\G_1$ and $\G_2$ respectively. Therefore any other solution $\displaystyle f= af_1+bf_2$ is  at least modular for $\G_1 \cap \G_2$. This explains the modularity  of the solutions in Theorem 1. of \cite{kaneko-koike}.

 In case $ \displaystyle k\equiv 0,\, 4 \mod 6$,  there  exists a one-dimensional modular solution  space, say generated by $g_1$. If $g_2$ is another solution that is linearly independent with $g_1$, then $g_2$ cannot be modular, otherwise, we  will have
\[
 \frac{k+1}{6}=  \frac{n}{m},
 \]
with $m$ and $n$ two positive integers satisfying $2\leq m\leq 5$ and $\gcd(m,n)=1$. Thus $ \displaystyle m(k+1)=6 \, n $. As $k+1\equiv \pm 1\mod 6$, we must have $m\equiv 0\mod 6$ which is impossible since $2\leq m\leq 5$.


 The case $ \displaystyle k\equiv \, 5 \, [6]$ requires a different approach since $\frac{k+1}{6}$ is now an integer. According to \cite{preprintSS}, we have
 \begin{thm}\cite[Theorem 4.2]{preprintSS} Let $r$ be a positive integer and let $\G=\SL$ if $r$ is even and $\G=\SL^2$ if $r$ is odd.
 	Then there exist two linearly independent solutions $y_1$ and $y_2$ to
 	$y''+\pi^2r^2E_4y=0$  such that:
 	\begin{enumerate}
 		\item $y_1$ and $y_2$ have the following $q-$expansions
 		\[
 		y_1(\tau)=q^{r/2}\,\sum_{n\geq 0}\, \alpha_n q^n\, \ \alpha_0\neq 0
 		\]
 		and
 		\[
 		y_2(\tau)=\tau y_1(\tau)+q^{-r/2}\,\sum_{n\geq 0}\, \beta_n q^n\,,\  \beta_0\neq 0.
 		\]
 		\item $y_1$ is a quasi-modular form  of weight zero and depth one for $\Gamma$ and $y_2(\tau)-\tau y_1(\tau)$ is a modular form of weight $-2$ for $\Gamma$.
 		\item $h_r=y_2/y_1$ is equivariant for $\G$.
 	\end{enumerate}	
 \end{thm}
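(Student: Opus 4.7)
The strategy is to construct $y_1,y_2$ as Frobenius solutions at the cusp $i\infty$, apply \thmref{sol-2} to obtain a two-dimensional multiplier $\rho$ of $\SL$ under which $F(\tau)=\binom{y_1}{y_2}$ is a weight $-1$ vector-valued automorphic form, determine $\rho$ on the generators of $\SL$, and then read off the three assertions.

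For (1), rewrite the equation $y''+\pi^2 r^2 E_4\, y=0$ in the variable $q=e^{2\pi i\tau}$. Since $E_4(q)=1+O(q)$, this is a regular-singular ODE at $q=0$ whose indicial equation is $\alpha^2=r^2/4$. The two indicial roots $\pm r/2$ differ by the positive integer $r$, so Frobenius theory produces a first holomorphic solution $y_1=q^{r/2}\sum_{n\geq 0}\alpha_n q^n$ with $\alpha_0\neq 0$ (the $\alpha_n$ determined from $\alpha_0$ by a triangular recursion) and a second, linearly independent solution of the form $y_2=c\,\tau\, y_1+q^{-r/2}\sum_{n\geq 0}\beta_n q^n$ with $\beta_0\neq 0$; the logarithmic term is linear in $\tau$ because $\log q=2\pi i\tau$, and $c$ quantifies the obstruction at the resonant index $n=r$. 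Rescaling $y_2$ normalizes $c=1$.

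By \thmref{sol-2}, $F(\gamma\tau)=(c\tau+d)^{-1}\rho(\gamma)F(\tau)$ for some $\rho:\SL\to\SLC$, and $h_r=y_2/y_1$ is $\rho$-equivariant. On $T=\binom{1\ 1}{0\ 1}$ we use $q(T\tau)=q$ and $q^{\pm r/2}(T\tau)=(-1)^r q^{\pm r/2}$ to read off from the expansions in (1) that $y_1(T\tau)=(-1)^r y_1(\tau)$ and $y_2(T\tau)=(-1)^r(y_1(\tau)+y_2(\tau))$. On $S=\binom{0\ -1}{1\ 0}$, the Schwarz-derivative cocycle together with the $S$-invariance of $2\pi^2 r^2 E_4$ forces $\{h_r\circ S,\tau\}=2\pi^2 r^2 E_4$, so $h_r\circ S$ must be a Möbius transform of $h_r$; a careful asymptotic comparison as $\tau\to 0$ (equivalently $-1/\tau\to i\infty$) against the known expansion of $h_r$ pins this transform down projectively as $S$. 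Combining, $\rho$ coincides with the natural inclusion $\SL\hookrightarrow\SLC$ (composed with the inner automorphism by $J=\binom{0\ 1}{1\ 0}$, which reflects our convention $h_r=y_2/y_1$) twisted by a sign character $\chi:\SL\to\{\pm 1\}$ with $\chi(T)=(-1)^r$. Thus when $r$ is even $\chi$ is trivial and $\G=\SL$, and when $r$ is odd $\chi$ is the unique nontrivial sign character and $\G=\SL^2=\ker\chi$.

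Finally, expanding the vector-valued identity $F(\gamma\tau)=(c\tau+d)^{-1}\rho(\gamma)F(\tau)$ componentwise for $\gamma=\binom{a\ b}{c\ d}\in\G$ (so $\chi(\gamma)=1$) yields
\[
y_1(\gamma\tau) = y_1(\tau) + (y_2-\tau y_1)(\tau)\cdot\frac{c}{c\tau+d},\qquad (y_2-\tau y_1)(\gamma\tau)=(c\tau+d)^{-2}(y_2-\tau y_1)(\tau),
\]
which is precisely assertion (2); taking the ratio gives $h_r(\gamma\tau)=\gamma\cdot h_r(\tau)$, establishing (3). The main obstacle is the computation of $\rho(S)$: the Möbius-uniqueness argument is clean in principle, but its execution demands delicate asymptotic analysis near $\tau\to 0$ where the $q$-expansion at $i\infty$ is not directly applicable. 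An alternative route bypasses this by constructing $h_r$ intrinsically via the general existence of $\rho$-equivariant functions \cite{kyushu} and then verifying that its Schwarzian equals $2\pi^2 r^2 E_4$ using the eigenvector set-up of \cite{preprintSS}.
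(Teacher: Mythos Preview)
The paper does not prove this theorem: it is quoted from \cite{preprintSS} and used as a black box for the case $k\equiv 5\pmod 6$, so there is no in-paper argument to compare against. Assessing your outline on its own, the architecture is reasonable but there are two genuine gaps.

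First, Frobenius theory at $q=0$ yields a second solution $y_2=c\,\tau\,y_1+q^{-r/2}\sum_{n\ge 0}\beta_n q^n$, but whether the logarithmic coefficient $c$ vanishes is precisely the question of whether the resonance at $n=r$ is obstructed; you ``rescale to $c=1$'' without justifying $c\neq 0$. In fact if $c=0$ then $h_r(\tau+1)=h_r(\tau)$, so $\rho(T)$ acts trivially on $h_r$ and is projectively the identity; the relations $\bar S^2=(\bar S\bar T)^3=1$ in $\PS$ then force $\rho$ to be projectively trivial, making $h_r$ an $\SL$-modular function---which is ruled out for integral $r$ by \thmref{thm-forum}. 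That short argument closes the gap, but it is absent from your write-up.

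Second, and this is the real obstacle, you do not determine $\rho(S)$. Your computation of $\rho(T)=(-1)^r\left(\begin{smallmatrix}1&0\\1&1\end{smallmatrix}\right)$ from the $q$-expansions is correct, but knowing $\rho(T)$ alone does not pin down $\rho$: many two-dimensional representations of $\SL$ share this value on $T$. The assertion that $\rho$ is the defining representation (conjugated by $J$) twisted by a sign character is exactly the content of parts (2) and (3), and you yourself flag the asymptotic argument at $\tau\to 0$ as unexecuted. Your fallback---invoke the eigenvector construction of \cite{preprintSS}---is not an independent proof but a pointer back to the very source from which the theorem is quoted.
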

 Here, $\SL^2$ is the subgroup made of the squares of elements of $\SL$, and it is the unique subgroup of index 2 in $\SL$.

 It follows that if $k\equiv 5\mod 6$, then we cannot have modular solutions. However, $f=\eta^{2k+2}y_1$ is a quasi-modular solution to \eqref{kan-koi} that has weight $k+1$ and depth 1, which is in line with  what is claimed in \cite{kaneko-koike}.

Thus, we have provided a number theoretic justification why certain solutions to \eqref{kan-koi} are modular of weight $k$, others are quasi-modular of weight $k+1$, and some are neither. However, while these claims hold when $k$ is an integer or a half integer congruent to $1/2$ modulo $3$, the list is not exhaustive as it was conjectured in \cite{kaneko-koike}. Indeed, what is missing and would make the list complete are the level 5 solutions. One needs to include the rational numbers $k$ such that $\disp \frac{k+1}{6}=\frac{n}{5}$ for $n$ not divisible by 5, that is,  $k=6n/5 -1$. For example, for $k=1/5$, the Hauptmodul for $\G(5)$ given by
\[
t=q^{\frac{1}{5}}\prod_{n\geq1}\,(1-q^n)^{\left(\frac{n}{5}\right)},
\]
where $\left(\frac{\,\cdot\,}{\,\cdot\,}\right)$ is the Legendre symbol, is a solution to $\disp \{h,\tau\}=2(1/5)^2\pi^2E_4$, \cite{forum}.  Thus, the modular solutions to \eqref{kan-koi} are $f_1=\eta^{2k+1}t'^{-1/2}$ and $f_2=\eta^{2k+1}tt'^{-1/2}$ and both are of weight 1/5. In the cases $k=7/5$
and $13/5$,
the solution to the Schwarzian equation are given respectively by
\[
\frac{t^2(t^5-7)}{7t^5+1}
\]
and
\[
\frac{t^3(t^{10}-39t^5-26)}{26t^{10}-39t^5-1}
\]
from which one can write down the modular solutions to \eqref{kan-koi}.


\end{document}